\newcommand{\dd}{\mathrm{d}}
\global\let\tikz@ensure@dollar@catcode=\relax
\setlist{
  listparindent=\parindent,
  parsep=0pt,
}
\numberwithin{equation}{section}
\theoremstyle{plain} 
\newtheorem{theorem}{Theorem}[section]
\theoremstyle{definition}
\definecolor{darkmagenta}{rgb}{0.5,0,0.5}
\definecolor{darkgreen}{rgb}{0,0.6,0}
\definecolor{darkblue}{rgb}{0,0,0.6}
\definecolor{darkred}{rgb}{0.8,0,0}
\definecolor{mellow}{rgb}{.847, 0.72, 0.525}
\begin{document}

\title{An explicit solution to the Skorokhod embedding problem for double exponential increments}

\author{Giang Nguyen\thanks{The University of Adelaide, School of Mathematical Sciences, SA 5005, Australia, \texttt{giang.nguyen@adelaide.edu.au}} 
\and Oscar Peralta\footnote{The University of Adelaide, School of Mathematical Sciences, SA 5005, Australia, \texttt{oscar.peraltaguitierrez@adelaide.edu.au}}
}
\date{}

\maketitle
\begin{abstract}
  Strong approximations of uniform transport processes to the standard Brownian motion rely on the Skorokhod embedding of random walk with centered double exponential increments. In this note we make such an embedding explicit by means of a Poissonian scheme, which both simplifies classic constructions of strong approximations of uniform transport processes \cite{griego1971almost} and improves their rate of strong convergence \cite{gorostiza1980rate}. We finalise by providing an extension regarding the embedding of a random walk with asymmetric double exponential increments.
\end{abstract}
\section{Introduction}
\label{sec:intro}
Let $\{X_k\}_{k\ge 1}$ be an i.i.d. collection of random variables with finite second moment, defined in some arbitrary probability space. Define the random walk
\begin{equation}\label{eq:Sn1}S_i=\sum_{k=1}^i X_k,\quad i\ge 0.\end{equation}
Let $(\Omega, \mathds{P}, \mathscr{F}, \{\mathscr{F}_t\}_{t\ge 0})$ be a \emph{given} filtered and complete probability space and let $\mathcal{B}=\{B_t\}_{t\ge 0}$ be an $\mathscr{F}_t$--adapted Brownian motion defined on it. The Skorokhod embedding theorem/problem \cite{skorokhod1982studies} states that there exists an increasing sequence of $\mathscr{F}_t$-stopping times $\{T_i\}_{i\ge 1}$ such that for all $i\ge 1$,
\begin{equation}\label{eq:mainSkorrokhod1}S_i\stackrel{d}{=} B_{T_i},\end{equation}
\begin{equation}\label{eq:Skorokhodmoments1}\mathds{E}\left[T_i-T_{i-1}\right]=\mathds{E}\left[X_1^2\right]\quad\mbox{and}\quad\mathds{E}\left[(T_i-T_{i-1})^2\right]\le 4\mathds{E}\left[X_1^4\right],\end{equation}
where $T_0=0$. In other words, for any given random walk $\{S_i\}_{i\ge 1}$ one can find a sequence of epochs $\{T_i\}_{i\ge 0}$, called \emph{Skorokhod stopping times}, such that the Brownian motion $\mathcal{B}$ observed at those epochs is \emph{distributionally} equivalent to $\{S_i\}_{i\ge 1}$. A survey of methods to construct $\{T_i\}_{i\ge 0}$ such that (\ref{eq:mainSkorrokhod1}) holds can be found in \cite{obloj2004skorokhod}. Some of those methods rely on \emph{external randomization}, meaning that $\{T_i\}_{i\ge 1}$ might also depend in some mechanism independent of $\mathcal{B}$. For such a case, one needs to appropriately enlarge the original probability space, which is straightforward to do and in most situations implicit.

In this paper, we give a strikingly simple construction of $\{T_i\}_{i\ge 1}$ for the particular case in which $\{X_k\}_{k\ge 1}$ have a common density function on the form
\begin{equation}\label{eq:fX1}f(x)=\sqrt{\lambda} e^{-2\sqrt{\lambda}|x|},\quad x\in\mathds{R}, \lambda>0,\end{equation}
called the \emph{centered double exponential law} of parameter $2\sqrt{\lambda}$. Its associated random walk $\{S_i\}_{i\ge 0}$ and corresponding Skorokhod embedding have been intensively used in the literature concerning strong approximations of uniform transport processes to the standard Brownian motion and related processes, for instance, in \cite{griego1971almost,gorostiza1979strong,gorostiza1980rate,bardina2019strong,bardina2016complex}. More specifically, they were used to show the existence of a family of uniform transport processes $\mathcal{F}^\lambda=\{F^\lambda_t\}_{t\ge 0}$ on the form 
\begin{align}\label{eq:defFlambda1}
F^\lambda_t=\sqrt{\lambda}\int_0^t J_s^\lambda \dd s
\end{align}
that converge strongly to $\mathcal{B}$ as $\lambda\rightarrow\infty$,
where $\{J^\lambda_t\}_{t\ge 0}$ is a Markov jump process on $\{1,-1\}$ with initial distribution $(1/2, 1/2)$ and intensity matrix $\left(\begin{smallmatrix}-\lambda&\lambda\\ \lambda& -\lambda \end{smallmatrix}\right)$.

Here we prove that, with increments following (\ref{eq:fX1}), the Skorokhod stopping times $\{T_i\}_{i\ge 1}$ can be taken to correspond to the arrivals of a Poisson process  independent of $\mathcal{B}$. This draws a nice bridge between the existing uniform transport processes literature, and the theory of random models observed at Poissonian times such as the one recently developed for L\'evy processes \cite{albrecher2016exit,albrecher2017strikingly}. Even more, it allows for a simpler description of the construction of strong approximations of uniform transport processes to the standard Brownian motion in \cite{griego1971almost}, together with improved rates of convergence compared to those provided in \cite{gorostiza1980rate}. Additionally, we include how this embedding carries on to the case of Brownian motion with drift which, by means of \cite{griego1971almost}, allows us to specify a new family of strongly convergent transport processes.
\section{Main result}\label{sec:main}
Let $\{X_k\}_{k\ge 1}$  be an i.i.d. sequence of random variables whose elements follow a common law on the form (\ref{eq:fX1}), and let $\{S_i\}_{i\ge 0}$ be defined by (\ref{eq:Sn1}). We call $\{S_i\}_{i\ge 0}$ the \emph{centered double exponential random walk} of parameter $2\sqrt{\lambda}$.
\begin{theorem}[Skorokhod embedding for the centered double exponential r.w.]\label{th:main1}
Let $(\Omega, \mathds{P}, \mathscr{F}, \{\mathscr{F}_t\}_{t\ge 0})$ be a filtered probability space which supports an $\mathscr{F}_t$-adapted Brownian motion and an independent Poisson process of intensity $2\lambda$ with arrival times $\{\tau_i\}_{i\ge 0}$ (with $\tau_0:=0$). Then $T_i=\tau_i$ ($i\ge 0$) solves (\ref{eq:mainSkorrokhod1}) and (\ref{eq:Skorokhodmoments1}) in the case $\{S_i\}_{i\ge 0}$ is a centered double exponential random walk of parameter $2\sqrt{\lambda}$.
\end{theorem}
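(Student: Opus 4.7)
The plan is to reduce the statement to a classical subordination identity: the centered double exponential law with parameter $2\sqrt{\lambda}$ is precisely the distribution of a Brownian motion evaluated at an independent $\mathrm{Exp}(2\lambda)$ time. Once this is in place, the theorem follows from the strong Markov property of $\mathcal{B}$ together with elementary moment computations.

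First I would establish the key identity: if $E\sim\mathrm{Exp}(2\lambda)$ is independent of $\mathcal{B}$, then $B_E$ has density $f$ given in (\ref{eq:fX1}). Conditioning on $E=t$ gives $B_E\mid E=t\sim\mathcal{N}(0,t)$, hence
\[
\mathds{E}[e^{i\xi B_E}]=\int_0^\infty 2\lambda\,e^{-2\lambda t}\,e^{-\xi^2 t/2}\,\dd t=\frac{4\lambda}{4\lambda+\xi^2},
\]
which matches the characteristic function of the centered double exponential of parameter $2\sqrt{\lambda}$. Alternatively, the same identity drops out of convolving the $\mathrm{Exp}(2\lambda)$ density with a Gaussian kernel and recognising the resulting Laplace integral.

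Next I would bootstrap this single-increment identity to the whole sequence. Set $E_i:=\tau_i-\tau_{i-1}$, so that the $E_i$ are i.i.d.\ $\mathrm{Exp}(2\lambda)$ and independent of $\mathcal{B}$. Since $\tau_{i-1}$ is an $\mathscr{F}_t$-stopping time, the strong Markov property of $\mathcal{B}$ at $\tau_{i-1}$, combined with the independence of $E_i$ from $\mathscr{F}_{\tau_{i-1}}$, forces the increments $B_{\tau_i}-B_{\tau_{i-1}}$ to be mutually independent, each distributed as a Brownian motion evaluated at an independent $\mathrm{Exp}(2\lambda)$ time. By the key identity, each such increment is centered double exponential of parameter $2\sqrt{\lambda}$, yielding the process-level distributional equality $\{B_{\tau_i}\}_{i\ge 0}\stackrel{d}{=}\{S_i\}_{i\ge 0}$ and in particular (\ref{eq:mainSkorrokhod1}).

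For the moment conditions (\ref{eq:Skorokhodmoments1}) I would just integrate. Direct computation against $f$ gives $\mathds{E}[X_1^2]=1/(2\lambda)$ and $\mathds{E}[X_1^4]=3/(2\lambda^2)$, while the exponential moments give $\mathds{E}[E_i]=1/(2\lambda)$ and $\mathds{E}[E_i^2]=1/(2\lambda^2)$. Thus $\mathds{E}[T_i-T_{i-1}]=\mathds{E}[X_1^2]$ and $\mathds{E}[(T_i-T_{i-1})^2]=\tfrac{1}{3}\,\mathds{E}[X_1^4]\le 4\,\mathds{E}[X_1^4]$, with room to spare. I do not anticipate a real obstacle: the only substantive ingredient is the subordination identity $B_E\sim f$, which is precisely the reason the centered double exponential law is the natural target for a Poissonian Skorokhod scheme; everything else is the strong Markov property or a one-line integral.
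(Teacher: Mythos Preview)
Your proof is correct and follows the same overall architecture as the paper's: reduce (\ref{eq:mainSkorrokhod1}) to the one-step identity $B_{\tau_1}\stackrel{d}{=}X_1$ via independent stationary increments (equivalently, the strong Markov property), then check the moment conditions (\ref{eq:Skorokhodmoments1}) by direct computation. Your numerics match the paper's.

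The one genuine methodological difference is in how the key identity $B_{\tau_1}\sim f$ is established. You compute the characteristic function $\mathds{E}[e^{i\xi B_E}]=4\lambda/(4\lambda+\xi^2)$ by integrating the Gaussian transform against the exponential density, which is self-contained and arguably the shortest route. The paper instead invokes the Wiener--Hopf factorisation for Brownian motion killed at an independent exponential time, obtaining $B_{\tau_1}\stackrel{d}{=}Y_1-Y_2$ with $Y_1,Y_2$ i.i.d.\ $\mathrm{Exp}(2\sqrt{\lambda})$, and then convolves the two exponential densities. Your approach avoids citing an external result; the paper's approach exposes the structural decomposition into running supremum and infimum, which is what carries over cleanly to the drifted case in Theorem~\ref{th:asym1} (where the two Wiener--Hopf factors have different rates $\eta_\lambda,\omega_\lambda$). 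Either argument is entirely adequate here.
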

\begin{proof}
First, notice that both equations in (\ref{eq:Skorokhodmoments1}) concern moments of the exponential distribution, which translate as
\begin{align*}
&\mathds{E}\left[\tau_i-\tau_{i-1}\right]= \frac{1}{2\lambda} =\mathds{E}\left[X_1^2\right],\quad\mbox{and}\\
&\mathds{E}\left[(\tau_i-\tau_{i-1})^2\right]= \frac{2!}{2^2\lambda^2}\le \frac{4\cdot 4!}{2^4\lambda^2} = 4\mathds{E}\left[X_1^4\right],
\end{align*}
and thus (\ref{eq:Skorokhodmoments1}) holds. By the independent and stationary increments property of $\mathcal{B}$, it is enough to prove (\ref{eq:mainSkorrokhod1}) for $i=1$, which we do next. The Wiener--Hopf factorisation for the Brownian motion \cite[Corollary 2.4.10]{bladt2017matrix} implies that $B_{\tau_1}$ is equal in distribution to $Y_1-Y_2$, where $Y_1$ and $Y_2$ are independent exponential random variables of parameter $2\sqrt{\lambda}$. Thus, the density of $B_{\tau_1}$ at $x\in\mathds{R}$ is given by
\begin{align*}
f_{Y_1-Y_2}(x)& =\int_{-\infty}^\infty f_{Y_1}(x-z) f_{-Y_2}(z)\dd z\\
& = \int_{-\infty}^\infty \left(2\sqrt{\lambda}e^{-2\sqrt{\lambda} (x-z)}\mathds{1}\{x-z\ge 0\}\right)\left(2\sqrt{\lambda}e^{2\sqrt{\lambda} z}\mathds{1}\{z\le 0\}\right)\dd z\\
& = \sqrt{\lambda}e^{-2\sqrt{\lambda} x} \int_{-\infty}^\infty 4\sqrt{\lambda} e^{4\sqrt{\lambda} z}\mathds{1}\{ z\le 0\wedge x\}\dd z\\
& = \sqrt{\lambda}e^{-2\sqrt{\lambda} x} \left[e^{4\sqrt{\lambda}(x\wedge 0)}\right]=\sqrt{\lambda}e^{-2\sqrt{\lambda} |x|},
\end{align*}
implying that $B_{\tau_1}\stackrel{d}{=} S_1$ and thus the proof is finished.
\end{proof}
Having a precise description of the Skorokhod stopping times used in the construction of uniform transport process implies two things. The first is, some elements of these constructions can be simplified. For instance, the initial construction of \cite{griego1971almost} may be instead setup as a standard Brownian motion which is observed at Poisson times, avoiding the use of results concerning the Skorokhod embedding problem, which we sketch next.\footnote{From Oscar: my goal is to give a one paragraph construction. Is it clear enough?} Regard $\{\tau_i\}_{i\ge 1}$ as in Theorem \ref{th:main1}, define $\ell(0)=0$ and let $\{\ell(k)\}_{k\ge 1}$ be the inflection epochs of the random walk $\{B_{\tau_{i}}\}_{i\ge 0}$, that is,
\[\ell(k)=\inf\{i>\ell_{k-1}: (B_{\tau_{i}} - B_{\tau_{i-1}})(B_{\tau_{i+1}} - B_{\tau_{i}})<0\}.\]
Let $\theta_k=\tau_{\ell(k)}$. Theorem \ref{th:main1} implies that for all $k\ge 1$, $|B_{\theta_k}-B_{\theta_{k-1}}|$ follows the distribution of a $\mbox{Geo}(0.5)$--convolution of $\mbox{Exp}(2\sqrt{\lambda})$ distributions, which is itself an $\mbox{Exp}(\sqrt{\lambda})$ distribution. Moreover, a thinning argument implies that $\{\theta_k\}_{k\ge 1}$ is a Poisson process of parameter $\lambda$. Now, let $\mathcal{F}^\lambda=\{F^\lambda_t\}_{t\ge 0}$ be the continuous piecewise-linear process with $F^\lambda_0=0$, slopes $\pm \sqrt{\lambda}$, and whose inflection epochs $\{\chi_k\}_{k\ge 1}$ are such that $F^{\lambda}_{\chi_k}=B_{\theta_k}$ for all $k\ge 1$. Since $\chi_k-\chi_{k-1}=(\lambda)^{-1/2} |B_{\theta_k}-B_{\theta_{k-1}}|\sim\mbox{Exp}(\lambda)$, then $\{\chi_k\}_{k\ge 1}$ is also a Poisson process of parameter $\lambda$, implying that $\mathcal{F}^\lambda$ is indeed a uniform transport process as the one described by the r.h.s. of (\ref{eq:defFlambda1}). The precise arguments regarding its strong convergence to $\mathcal{B}$ are akin to those in \cite[p. 1131]{griego1971almost}.

The second implication is that results concerning the strong rate of convergence of uniform transport processes to the standard Brownian motion can be improved. A considerable amount of work in \cite{gorostiza1980rate} concerns quantitavely measuring how \emph{well} the random grid induced by the Skorokhod stopping times $\{T_i\}_{i\ge 1}$ approximates the deterministic grid $\{\mathds{E}(T_i)\}_{i\ge 0}$ on compact intervals. More specifically, they show that if $\{T_i\}_{i\ge 1}$ are Skorokhod stopping times  associated to the centered double exponential random walk of parameter $2n$ (for large $n\in \mathds{N}$), then
\begin{equation}\label{eq:tauspeed1}\mathds{P}\left(\max_{1\le i \le 2n^2}\left|T_i-\mathds{E}(T_i)\right|>\beta(q)\delta_n\right)=o(n^{-q})\quad \mbox{with}\end{equation}
\[\delta_n = n^{-1} (\log n)^{4+3/(4 \log n)}.\]
To do so, they use Doob's $L_p$-maximal inequality, a bound on the moments of Skorokhod stopping times \cite{sawyer1974skorokhod} and combinatorial arguments \cite[Equation (5)]{gorostiza1980rate}. Since we have identified $\{T_i\}_{i\ge 0}$ as being Poisson arrival times $\{\tau_i\}_{i\ge 0}$ of intensity $2n^2$, we get the following shortened proof which additionally yields an improved rate of convergence.
\begin{theorem}
Let $\{\tau_i\}_{i\ge 1}$ be the arrivals of a Poisson process of parameter $2n^2$. Then, for each $q>0$ there exists some $\beta^*(q)>1$ such that
\begin{equation}\label{eq:tauspeed2}\mathds{P}\left(\max_{1\le i \le 2n^2}\left|\tau_i-\mathds{E}(\tau_i)\right|>\beta^*(q)\delta_n^*\right)=o(n^{-q})\quad \mbox{with}\end{equation}
\[\delta_n^* = n^{-1} \lfloor \log n\rfloor.\]
\end{theorem}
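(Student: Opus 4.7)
The plan is to exploit the fact that, under the Poissonian representation of Theorem~\ref{th:main1}, $M_i := \tau_i - \mathds{E}(\tau_i)$ is a mean-zero random walk with i.i.d.\ centred-exponential increments $\tilde E_k := (\tau_k - \tau_{k-1}) - 1/(2n^2)$. Having an explicit law for $\tilde E_1$ makes the standard Cram\'er--Chernoff machinery available in closed form, bypassing both the $L_p$-moment bounds of \cite{sawyer1974skorokhod} and the combinatorial arguments of \cite{gorostiza1980rate} used to derive (\ref{eq:tauspeed1}).

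Concretely I would first compute the cumulant generating function of a single increment: for $0 < s < 2n^2$,
\[
\psi(s) := \log\mathds{E}\!\left[e^{s\tilde E_1}\right] = -\frac{s}{2n^2} - \log\!\left(1 - \frac{s}{2n^2}\right).
\]
Applying Doob's maximal inequality to the nonnegative submartingales $i\mapsto e^{\pm s M_i}$ and minimising the resulting exponents over $s$ then yields the two-sided Cram\'er bound
\[
\mathds{P}\!\left(\max_{1\le i\le 2n^2}|M_i|>a\right)\le \exp\!\left(-2n^2[a - \log(1+a)]\right) + \exp\!\left(-2n^2[-a - \log(1-a)]\right)
\]
for every $0<a<1$, with optimising values $s^\star = \pm 2n^2 a/(1\pm a)$.

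Specialising to $a = \beta^*(q)\delta_n^* = \beta^*(q) n^{-1}\lfloor\log n\rfloor$ and using the elementary Taylor estimates $a - \log(1+a) \ge a^2/4$ and $-a - \log(1-a) \ge a^2/2$ (valid for $0<a\le 1/2$, which holds for all sufficiently large $n$), the right-hand side is bounded by $2\exp(-\tfrac12(\beta^*(q))^2\lfloor\log n\rfloor^2)$. This decays faster than any fixed negative power of $n$, so any $\beta^*(q)>1$ works for large $n$; picking, for instance, $\beta^*(q)=\max\{2,\sqrt{2q}\}$ makes the $o(n^{-q})$ rate fully explicit.

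The only step requiring any care is checking that the optimising $s^\star$ stays inside the admissible strip $(0,2n^2)$ of the moment generating function and that the Taylor remainders are uniform as $n\to\infty$; both are automatic because $a\to 0$. The improvement from the rate $(\log n)^{4 + 3/(4\log n)}$ of \cite{gorostiza1980rate} to $\lfloor\log n\rfloor$ traces directly to our having genuine exponential increments rather than generic Skorokhod stopping-time increments.
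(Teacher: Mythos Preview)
Your argument is correct and takes a genuinely different route from the paper. The paper also starts from Doob's maximal inequality, but in its $L_p$ form with the \emph{$n$-dependent} exponent $p=2\lfloor\log n\rfloor$; it then invokes an external moment bound (Lemma~4.5 of \cite{nguyen2019strong}) for the centred Erlang variable $\tau_{2n^2}-1$, and finally chooses $\beta^*(q)=e^{2q+2}$ to make the resulting ratio $O(n^{-2q})$. You instead apply Doob to the exponential submartingales $e^{\pm sM_i}$ and optimise in $s$, i.e.\ the Cram\'er--Chernoff route. Your approach is more self-contained---no auxiliary moment lemma is needed because the cumulant generating function of a centred exponential is explicit---and it actually yields a bound of order $\exp\bigl(-c\lfloor\log n\rfloor^{2}\bigr)=n^{-c\lfloor\log n\rfloor}$, which decays superpolynomially; hence any fixed $\beta^*>1$ already gives $o(n^{-q})$ for every $q$, whereas the paper's argument ties $\beta^*(q)$ to $q$ and only produces a polynomial rate (which, of course, is all the statement requires). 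The small caveats you flag---that the optimal $s^\star$ lies in the admissible strip and that the quadratic Taylor lower bounds $a-\log(1+a)\ge a^2/4$ and $-a-\log(1-a)\ge a^2/2$ apply---are indeed automatic once $a=\beta^*(q)\,n^{-1}\lfloor\log n\rfloor<1$, which holds for all large $n$.
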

\begin{proof}
Doob's $L_p$-maximal inequality implies that for $n\in\mathds{N}\cap(e,\infty)$,
\begin{align*}
\mathds{P}\left(\max_{1\le i \le 2n^2}\left|\tau_i-\mathds{E}(\tau_i)\right|>\beta^*(q)\delta_n^*\right) & \le \frac{\mathds{E}\left[(\tau_{2n^2}-1)^{2\lfloor \log n\rfloor}\right]}{(\beta^*(q)\delta_n^*)^{2\lfloor \log n\rfloor}}.
\end{align*}
Since $\tau_{2n^2}\sim\mbox{Erlang}(2n^2,2n^2)$, then Lemma 4.5 of \cite{nguyen2019strong} implies that
\begin{align*}
\mathds{E}\left[(\tau_{2n^2}-1)^{2\lfloor \log n\rfloor}\right]&\le \frac{(2\lfloor \log n\rfloor)!\sqrt{2n^2}}{(2n^2)^{2\lfloor \log n\rfloor}}\frac{\sqrt{2n^2}^{2\lfloor \log n\rfloor+1}-1}{\sqrt{2n^2}-1}\\
&\le C_1\left(\frac{2\lfloor \log n\rfloor}{2n^2}\right)^{2\lfloor \log n\rfloor} n^{2\lfloor \log n\rfloor+2} = C_1 n^{2}\left(\frac{\lfloor \log n\rfloor}{n}\right)^{2\lfloor \log n\rfloor},
\end{align*}
where $C_1$ is some positive real constant. Thus,
\begin{align}
\mathds{P}\left(\max_{1\le i \le 2n^2}\left|\tau_i-i/(2n^2)\right|>\beta^*(q)\delta_n^*\right) & \le C_1n^2\left(\beta^*(q)^{-2 (\log n-1)}\right)\label{eq:boundaux1}.
\end{align}
By letting $\beta^*(q):=e^{2q + 2}$ the r.h.s. of (\ref{eq:boundaux1}) is $O(n^{-2q})$ and the proof is finished.
\end{proof}
By using the rate of convergence (\ref{eq:tauspeed1}) and further calculations, the authors in \cite{gorostiza1980rate} are able to prove that for every $q>0$ there exists some $\alpha(q)>0$
\[\mathds{P}\left(\sup_{s\in[0,1]}\left|F^{(2n^2)}_s-B_s\right|>\alpha(q) n^{-1/2}(\log n)^{5/2}\right)=o(n^{-q}).\]
If instead one takes $T_i=\tau_i$ ($i\ge 0$) and plugs the rate of convergence (\ref{eq:tauspeed2}) in their proof, one gets that there exists some $\alpha^*(q)>0$ such that
\[\mathds{P}\left(\sup_{s\in[0,1]}\left|F^{(2n^2)}_s-B_s\right|>\alpha^*(q) n^{-1/2}\log n\right)=o(n^{-q}),\]
which corresponds to the same rate of strong convergence obtained in \cite{nguyen2019strong} for a related construction which is shown to converge to the Markov-modulated Brownian motion.

\section{Extension}
Theorem \ref{th:main1} can be easily extended to the case of asymmetric double exponential random walks as follows.
\begin{theorem}\label{th:asym1}
Let $\{\tau_i\}_{i\ge 0}$ be a Poisson process of intensity $2\lambda$, let $\{B_t\}_{t\ge 0}$ be an independent standard Brownian motion and fix $\mu\in\mathds{R}, \sigma > 0$. For $t\ge 0$ define $W_t=\mu t + \sigma B_t$. Then, for all $i\ge 1$
\[S^{\mathrm{a}}_i\stackrel{d}{=}W_{\tau_i},\]
where $\{S^{\mbox{a}}_i\}_{i\ge 0}$ is a random walk whose increments are driven by the density function
\begin{equation}\label{eq:asym1}f^{\mathrm{a}}(x)= \left[\frac{\omega_\lambda}{\omega_\lambda+\eta_\lambda}\right]\eta_\lambda e^{-\eta_\lambda x}\mathds{1}\{x\ge 0\} + \left[\frac{\eta_\lambda}{\omega_\lambda+\eta_\lambda}\right]\omega_\lambda e^{\omega_\lambda x}\mathds{1}\{x< 0\},\end{equation}
where
\[\eta_\lambda=\sqrt{\frac{\mu^2}{\sigma^4} + \frac{4\lambda}{\sigma^2}}- \frac{\mu}{\sigma^2}\quad\mbox{and}\quad\omega_\lambda=\sqrt{\frac{\mu^2}{\sigma^4} + \frac{4\lambda}{\sigma^2}}+\frac{\mu}{\sigma^2}.\]
\end{theorem}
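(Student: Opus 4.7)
My plan is to mirror the strategy used for Theorem \ref{th:main1}, replacing the Wiener--Hopf factorisation of the driftless Brownian motion with its analogue for $W=\{W_t\}_{t\ge 0}$. By the independent and stationary increments property of $W$ and the independence and identical distribution of the exponential interarrival times $\tau_i-\tau_{i-1}$, it suffices to verify that $W_{\tau_1}$ has density $f^{\mathrm{a}}$; the full random walk identity then follows iteratively.

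For the case $i=1$, I would first invoke the Wiener--Hopf factorisation for the Brownian motion with drift evaluated at an independent $\mathrm{Exp}(2\lambda)$ time (see, e.g., \cite[Corollary 2.4.10]{bladt2017matrix}), which yields $W_{\tau_1}\stackrel{d}{=} Y_1-Y_2$ with $Y_1,Y_2$ independent and exponentially distributed. The parameters of $Y_1$ and $Y_2$ are the positive roots of the Cram\'er--Lundberg-type equation $\frac{\sigma^2}{2}\theta^2+\mu\theta - 2\lambda = 0$ (taken in absolute value), which are exactly $\eta_\lambda$ and $\omega_\lambda$ as defined in the statement. This identification of the Wiener--Hopf exponents with the rates $\eta_\lambda,\omega_\lambda$ is the only non-routine step and, in my view, the main conceptual obstacle: once it is done, the asymmetry in the resulting density (as opposed to the symmetric double exponential of Theorem \ref{th:main1}) is driven entirely by the asymmetry between the two roots of the quadratic.

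Having pinned down $Y_1\sim\mathrm{Exp}(\eta_\lambda)$ and $Y_2\sim\mathrm{Exp}(\omega_\lambda)$, the remainder is a direct convolution argument analogous to the one carried out in the proof of Theorem \ref{th:main1}. Splitting into the cases $x\ge 0$ and $x<0$ and integrating $f_{Y_1}(x-z)f_{-Y_2}(z)$ gives, after a short computation, the two branches $\frac{\eta_\lambda\omega_\lambda}{\eta_\lambda+\omega_\lambda}e^{-\eta_\lambda x}$ and $\frac{\eta_\lambda\omega_\lambda}{\eta_\lambda+\omega_\lambda}e^{\omega_\lambda x}$, which coincide with $f^{\mathrm{a}}(x)$. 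This completes the distributional match $W_{\tau_1}\stackrel{d}{=}S_1^{\mathrm{a}}$ and, by the reduction in the first paragraph, the theorem.

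I would additionally remark that the symmetric case $\mu=0$, $\sigma=1$ recovers $\eta_\lambda=\omega_\lambda=2\sqrt{\lambda}$ and thus Theorem \ref{th:main1}, providing a sanity check of the formulas. The moment identities analogous to \eqref{eq:Skorokhodmoments1} are not part of the statement here, but if desired they could be verified by direct computation of the first two moments of $f^{\mathrm{a}}$ and comparison with those of $\tau_1$.
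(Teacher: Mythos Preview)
Your proposal is correct and follows essentially the same route as the paper's proof: both invoke the Wiener--Hopf factorisation for Brownian motion with drift from \cite[Corollary 2.4.10]{bladt2017matrix} to write $W_{\tau_1}\stackrel{d}{=}Y_1-Y_2$ with $Y_1\sim\mathrm{Exp}(\eta_\lambda)$, $Y_2\sim\mathrm{Exp}(\omega_\lambda)$, and then compute the convolution density to recover $f^{\mathrm{a}}$. Your additional remarks (the explicit identification of $\eta_\lambda,\omega_\lambda$ as the absolute values of the roots of $\tfrac{\sigma^2}{2}\theta^2+\mu\theta-2\lambda=0$, and the sanity check at $\mu=0,\sigma=1$) are useful elaborations but do not change the argument.
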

\begin{proof}
Let $Y_1^{\mathrm{a}}$ and $Y_2^{\mathrm{a}}$ be the Wiener--Hopf factors of a Brownian motion with drift \cite[Corollary 2.4.10]{bladt2017matrix}, in the sense that $W_{\tau_1}\stackrel{d}{=}Y_1^{\mathrm{a}}-Y_2^{\mathrm{a}}$ with $Y_1^{\mathrm{a}}\perp Y_2^{\mathrm{a}}$, $Y_1^{\mathrm{a}}\sim\mbox{Exp}(\eta_\lambda)$ and $Y_2^{\mathrm{a}}\sim\mbox{Exp}(\omega_\lambda)$. Since
\begin{align*}
f_{Y_1^{\mathrm{a}}-Y_2^{\mathrm{a}}}(x)& = \int_{-\infty}^\infty \left(\eta_\lambda e^{-\eta_\lambda (x-z)}\mathds{1}\{x-z\ge 0\}\right)\left(\omega_\lambda e^{\omega_\lambda z}\mathds{1}\{z\le 0\}\right)\dd z\\
& = \frac{\eta_\lambda\omega_\lambda}{\omega_\lambda+\eta_\lambda} e^{-\eta_\lambda x} \left[e^{(\omega_\lambda+\eta_\lambda)(x\wedge 0)}\right],
\end{align*}
then (\ref{eq:asym1}) follows.
\end{proof}
Using Theorem \ref{th:asym1}, Section \ref{sec:main} and \cite[p. 1131]{griego1971almost}, it is straightforward to construct strongly convergent transport processes $\mathcal{G}^\lambda=\{G^\lambda_t\}_{t\ge 0}$ (to $\{W_t\}_{t\ge 0}$ as $\lambda\rightarrow\infty$) on the form
\[G^\lambda_t=\int_0^t \frac{2\lambda}{\eta_\lambda}\mathds{1}\{K^\lambda_s=+\} - \frac{2\lambda}{\omega_\lambda}\mathds{1}\{K^\lambda_s=-\} \dd s,\]
where $\{K^\lambda_t\}_{t\ge 0}$ is a Markov jump process on $\{+,-\}$ with initial distribution $\left(\frac{\omega_\lambda}{\omega_\lambda+\eta_\lambda}, \frac{\eta_\lambda}{\omega_\lambda+\eta_\lambda}\right)$ and intensity matrix $\frac{2\lambda}{\omega_\lambda+\eta_\lambda}\left(\begin{smallmatrix}-\eta_\lambda& \eta_\lambda\\ \omega_\lambda& -\omega_\lambda \end{smallmatrix}\right)$.

\bibliographystyle{abbrv}
\bibliography{oscar}
\end{document}